%11/24/2004
%8/16/2008
\documentclass[11pt,a4paper]{article}
\usepackage{bbm}

\usepackage[leqno]{amsmath}
\usepackage{amsfonts}
\usepackage{graphicx}
\usepackage{amsmath}
\usepackage{amssymb}
\usepackage{latexsym}
\usepackage{amsmath, amsfonts,amssymb, amsthm, euscript,makeidx,color,mathrsfs}
\usepackage{enumerate}

 \usepackage{subfigure}

%%%%%%%%%%%%%%%%%%%%%%%%%%%%%%%%%%%%%%%%%%%%%%%%%%%%%%%%%%%%%%%%%%%%%%%%%%%%%%%%%%%%%%%%%%%%%%%%%%%%%%%%%%%%%%%%%%%
\usepackage[colorlinks,linkcolor=blue,anchorcolor=green,citecolor=red]{hyperref}%when print, use the next package
%\usepackage[colorlinks,linkcolor=black,anchorcolor=black,citecolor=black]{hyperref}
%%%%%%%%%%%%%%%%%%%%%%%%%%%%%%%%%%%%%%%%%%%%%%%%%%%%%%%%%%%%%%%%%%%%%%%%%%%%%%%%%%%%%%%%%%%%%%%%%%%%%%%%%%%%%%%%%%%

%%%%%%%%%%%%%%%%%%%%%%%%%%%%%%%%%%%%%%%%%%%%%%%%%%%%%%%%%%%%%%%%%%%%%%%%%%%%%%%%

\oddsidemargin  = 0pt \evensidemargin = 0pt \marginparwidth = 1in
\marginparsep   = 0pt \leftmargin     = 1.25in \topmargin =0pt
\headheight     = 0pt \headsep        = 1.5em \topskip =0pt
%\footheight     = 0.25in
\footskip       =0.35in \textheight   = 9.1in \textwidth =6.5in
%%%%%%%%%%%%%%%%%%%%%%%%%%%%%%%%%%%%%%%%%%%%%%%%%%%%%%%%%%%%%%%%%%%%%%%%%%%%%%%%

%%%%%%%%%%%%%%%%%%%%%%%Definition of endproof%%%%%%%%%%%%%%%%%%%%%%%%%%%%%%%%%%%
\def\sqr#1#2{{\vcenter{\vbox{\hrule height.#2pt
              \hbox{\vrule width.#2pt height#1pt \kern#1pt \vrule width.#2pt}
              \hrule height.#2pt}}}}
%

%

%%%%%%%%%%%%%%%%%%%%%%%%%%%%%%%%%%%%%%%%%%%%%%%%%%%%%%%%%%%%%%%%%%%%%%%%%%%%%%%%

\def\bal{\begin{aligned}}
\def\eal{\end{aligned}}

\def\5n{\negthinspace \negthinspace \negthinspace \negthinspace \negthinspace }
\def\4n{\negthinspace \negthinspace \negthinspace \negthinspace }
\def\3n{\negthinspace \negthinspace \negthinspace }
\def\2n{\negthinspace \negthinspace }
\def\1n{\negthinspace }

\def\dbR{\mathbb{R}}

%\mathfrak

\def\={\buildrel \triangle \over =}

\def\ds{\displaystyle}

%
%Lower case Greek letters
%

\def\e{\varepsilon}

\def\l{\lambda}

%
%Upper case Greek letters
%

\def\O{\Omega}
\DeclareMathOperator{\df}{d\!}
%
%Calligraphic Capitals
%

%
%Upper case bold faced letters
%

%

%
\def\no{\noindent}

\def\ms{\medskip}

%
%Mathoperators
%

%%%%%%%%%%%%%%%%%%%%%%%%%%%%%%%%%%%%%%%%%%%%%%%%%%

%%%%%%%%%%%%%%%%%%%%%%%%%%%%%%%%%%%%%%%%%%%%%%%%%%

\def\cd{\cdot}

\def\({\Big (}
\def\){\Big )}
\def\[{\Big[}
\def\]{\Big]}

%%%%%%%%%%%%%%%%%%%%%%%%%%%%%%%%%%%%%%%%%%%%%%%%%%%%
%%%%%%%%%%%%%%%%%%%%%%%%%%%%%%%%%%%%%%%%%%%%%%%%%%%%
\def\bde{\begin{definition}\label}
\def\ede{\end{definition}}
\def\be{\begin{equation}}
\def\bel{\begin{equation}\label}
\def\ee{\end{equation}}
\def\bt{\begin{theorem}\label}
\def\et{\end{theorem}}
\def\bc{\begin{corollary}\label}
\def\ec{\end{corollary}}
\def\bl{\begin{lemma}\label}
\def\el{\end{lemma}}
\def\bp{\begin{proposition}\label}
\def\ep{\end{proposition}}
\def\bas{\begin{assumption}\label}
\def\eas{\end{assumption}}
\def\br{\begin{remark}\label}
\def\er{\end{remark}}
\def\bex{\begin{example}\label}
\def\ex{\end{example}}
\def\ba{\begin{array}}
\def\ea{\end{array}}
\def\ed{\end{document}}
%\def\pf{\begin{proof}}
%\def\ef{\end{proof}}
%%%%%%%%%%%%%%%%%%%%%%%%%%%%%%%%%%%%%%%%%%%%%%%%%%%%
%%%%%%%%%%%%%%%%%%%%%%%%%%%%%%%%%%%%%%%%%%%%%%%%%%%%

\def\square#1{\vbox{\hrule\hbox{\vrule height#1%
     \kern#1\vrule}\hrule}}
\def\rectangle#1#2{\vbox{\hrule\hbox{\vrule height#1%
     \kern#2\vrule}\hrule}}

% The next lines import blackboard bold the font \bb.

\font\tenbb=msbm10 \font\sevenbb=msbm7 \font\fivebb=msbm5

\newfam\bbfam
\scriptscriptfont\bbfam=\fivebb \textfont\bbfam=\tenbb
\scriptfont\bbfam=\sevenbb

%%%%%%%%%%%%%%%%%%%%%%%%%%%%%%%%%%%%%%%%%%%%%%%%%%%%%%%%%%%%%%%%%%%%%%%%
%%%%%%%%%%%%%%%%%%%%%%%%%%%%%%%%%%%%%%%%%%%%%%%%%%%%%%%%%%%%%%%%%%%%%%%%
\newtheorem{theorem}{\hskip 1.3em Theorem}[section]
\newtheorem{definition}[theorem]{\hskip 1.3em Definition}
\newtheorem{proposition}[theorem]{\hskip 1.3em Proposition}
\newtheorem{corollary}[theorem]{\hskip 1.3em Corollary}
\newtheorem{lemma}[theorem]{\hskip 1.3em Lemma}
\newtheorem{remark}[theorem]{\hskip 1.3em Remark}
\newtheorem{example}[theorem]{\hskip 1.3em Example}

\newtheorem{assumption}[theorem]{\hskip 1.3em Assumption}
%%%%%%%%%%%%%%%%%%%%%%%%%%%%%%%%%%%%%%%%%%%%%%%%%%%%%%%%%%%%%%%%%%%%%%%%
%%%%%%%%%%%%%%%%%%%%%%%%%%%%%%%%%%%%%%%%%%%%%%%%%%%%%%%%%%%%%%%%%%%%%%%%

\makeatletter
   
   \@addtoreset{equation}{section}
\makeatother

\begin{document}

\title{\bf Time-varying bang-bang property of minimal controls for approximately null-controllable heat equations\thanks{This work is supported in part by
the National Natural Science Foundation of China (11526167, 11371375), the Fundamental Research Funds for the Central
Universities (SWU113038, XDJK2014C076),  the Natural Science
Foundation of CQCSTC (2015jcyjA00017).}}

\author{Ning Chen\footnote{School of Information Science and Engineering,
    Central South University, Changsha 410075, P.R. China (\tt{ningchen@csu.edu.cn})},~~
    Yanqing Wang\footnote{Corresponding author, School of Mathematics and Statistics, Southwest University, Chongqing 400715, P.R. China (\tt yqwang@amss.ac.cn)},~~ and~~Dong-Hui Yang\footnote{School of Mathematics and Statistics, and School of Information Science and Engineering, Central South University, Changsha 410075, P.R. China (\tt{donghyang@139.com})}}

\maketitle

\begin{abstract}
In this paper, optimal time control problems and optimal target control problems are studied for the approximately null-controllable heat equations. 
%we present an equivalence result for  optimal time control problem and optimal target problem. optimal problems of approximately null-controllable heat equations. 
Compared with the existed results on these problems, the boundary of control variables are not  constants but time varying functions. The time-varying bang-bang property for optimal time control problem, and an equivalence theorem for optimal control problem and optimal target problem are obtained.
\end{abstract}

\ms

\no\bf Keywords: \rm Heat equation, bang-bang property, optimal time control problem, optimal target control problem

\ms

\no\bf AMS subject classification: \rm 35K05, 49J20

\maketitle

\section{Introduction}

Let $T$ be a positive number (can be taken $\infty$) and $\O$ be an open bounded 
domain with smooth 
boundary in $\dbR^N,\, N\geq 1$. Let $\omega$ be an open set of $\Omega$. 
Consider the following controlled system:
\begin{equation}\label{6.29.1}
\begin{cases}
\partial_t y(x,t)-\Delta y(x,t)=\chi_\omega\chi_{(\tau,T)}u(x,t), &\mbox{ in } \Omega\times (0,T),\\
y(x,t)=0, &\mbox{ on }\partial \Omega\times (0,T),\\
y(x,0)=y_0(x), &\mbox{ in } \Omega.
\end{cases}
\end{equation}
Here $y_0\in L^2(\Omega)$ is a given initial data, $u\in L^\infty(0,T; L^2(\Omega))$, $\tau\in[0,T)$, $\chi_\omega$ and $\chi_{(\tau, T)}$ stand for the  characteristic functions of $\omega$ and $(\tau, T)$, 
respectively. We denote the solution to \eqref{6.29.1} by $y(\cdot; \chi_{(\tau, T)}u, y_0)$ with initial data $y_0$ and control $u$.

In this paper, denote by $L^\infty_+(0,T)$ the subset of $L^\infty(0,T)$, whose element is almost surely positive. Denote by $\|\cdot\|$ and $\langle\cdot, \cdot\rangle$ the norm and inner product of $L^2(\Omega)$, respectively, and $B(0,r)$/$\bar B(0,r)$ the open/closed ball of $L^2(\Omega)$ with center 0 and radius $r>0$.

The null and approximate controllability of system \eqref{6.29.1} has been studied in many works (see, e.g. \cite{FPZ,FCZ}).
%The system \eqref{6.29.1} is null controllable and approximate controllability (see \cite{FPZ} and \cite{FCZ}). 
Especially, for each $\e>0$, since the system \eqref{6.29.1} is energy decaying, taking $u=0$, we have $\|y(t; 0, y_0)\|\leq \e$, when $t$ is large enough. By this, we can easily see that the system \eqref{6.29.1} is approximately null-controllable for large $T$. The reader can also refer to \cite{Apraiz-Escauriaza-Wang-Zhang14,Guo-Xu-Yang16, Guo-Yang15,Wang08, Wang-Xu13} for more discussions on controlled heat equations.

Three kinds of optimal control problems: the optimal time, target and norm control problems are important and interesting branches of optimization. For the deterministic systems, the reader can refer to \cite{Fattorini11} to obtain the recent results and open problems. The reader can also refer to \cite{Fattorini99, KW, LY,Lv10, MS, Wang08} for the optimal time control problems. For the stochastic ones, the  optimal norm control problems were considered in \cite{Wang-Yang-Yong-Yu16,Wang-Zhang15,Yong-Zhou99}  for controlled stochastic ordinary differential equations, and in \cite{Yang-Zhong16} for controlled stochastic heat equations. The reader can also refer to \cite{GL, Wang-Xu13, WZ} for  the work on equivalence relation between these three optimal control problems.

\ms

For a given function $M(\cd)\in L^\infty_+(0,T)$, we can define the admissible control set of controlled system \eqref{6.29.1}:
\begin{equation*}
\mathcal{U}_{M}\equiv\left\{ v\in L^\infty(0,T; L^2(\Omega))\bigm| \|v(t)\|\leq M(t) \mbox{ for a.e. } t\in (\tau, T)\right\},
\end{equation*}
and can denote the reachable set of system \eqref{6.29.1} with $u\in\mathcal{U}_{M}$ by
\begin{equation*}
\mathcal{R}(y_0,\tau,T)=\left\{ y(T; \chi_{(\tau, T)}u, y_0)\bigm| u\in \mathcal{U}_{M}\right\}
\end{equation*}
and
\begin{equation*}
\mathcal{R}(y_0,T)=\bigcup_{\tau\in (0,T)}\mathcal{R}(y_0,\tau,T).
\end{equation*}
By above discussion, without loss of generality,  we can assume that $\mathcal{R}(y_0,T)\cap \bar B(0,\e)\neq \emptyset$ for $\e>0$ and $T>0$. 
We need note that $y(T; \chi_{(\tau, T)}0,y_0)$ may not be in $\bar B(0,\e)$.

\ms

Consider the following optimal time control problem
\begin{equation}\label{1.14.1}
\tau(\e)=\sup_{\tau\in [0,T]}\left\{\tau\bigm| y(T; \chi_{(\tau, T)}u, y_0)\in \bar B(0,\e), u\in \mathcal{U}_{M}\right\}.
\end{equation}
If the optimal time control problem \eqref{1.14.1} is solvable,
i.e., there exist at least one $u^*\in\mathcal{U}_{M}$ such that $y(T; \chi_{(\tau(\e), T)}u^*, y_0)\in \bar B(0,\e)$, we call that $u^*$  an {\em optimal time control}. By by choosing the minimal sequence and applying the classical variational method, we can prove that the  optimal time control problem \eqref{1.14.1} has a solution $u^*$ (see Lemma \ref{1.24.8}).
What we are interested in is the following problem: 
\begin{equation*}\label{b-b}
\mbox{{\em Is the optimal time control $u^*$ satisfying the time-varying bang-bang property?}}
\end{equation*}

The (time-invariant) bang-bang property is a classical problem in control theory. There are many works on this topic (see, e.g.  \cite{MS, S,Wang08}).  In \cite{Wang08}, the author obtained the following the bang-bang property of a null-controlled heat equation. For a given positive constant $M_0$, define
\begin{equation*}
\mathcal{U}_{M_0}=\left\{ v\in L^\infty(0,T; L^2(\Omega))\bigm| \|v(t)\|\leq M_0 \mbox{ for a.e. } t\in (\tau, T)\right\}.
\end{equation*}
 Then, if $u^*\in \mathcal{U}_{M_0}$ is an optima time control respect to \eqref{1.14.1}, then the following time-invariant bang-bang property holds:
\begin{equation*}
\|u^*(t)\|=M_0 \mbox{ for a.e. } t\in (\tau(0),T).
\end{equation*}
\cite{WZhang} owns many interesting results, but the bang-bang property is also depend on the positive constant $M_0$. This work is inspired by \cite{Wang08, WZhang}. 
In this work, we study the time-varying bang-bang property of an approximately null-controllable heat equation. Compared with the problem studied in \cite{Wang08}, the boundary is not a constant $M_0$ but a function $M(\cd)$. Hence, the method used in \cite{Wang08} is not workable any more. Until now, to our best knowledge, there does not exist any work on this kind time-varying bang-bang property. The following is our first main result.

% In this work, we generalize the work of \cite{Wang08} to an approximately null-controllable heat equation. i.e., we obtain the following Theorem \ref{1.14.2}. (Indeed, the method that used in \cite{Wang08} is not workable for our work any more.) Until now, to our best knowledge, there does not exist any work to consider the time-varying bang-bang property. 

\begin{theorem}\label{1.14.2}
Suppose that $M(\cd)\in L^\infty_+(0,T)$, $\e>0$, and $\mathcal{R}(y_0,T)\cap\bar B(0,\e)\neq \emptyset$. Then there exists a unique optimal time control $u^*\in\mathcal{U}_{M}$ such that the 
optimal time control problem \eqref{1.14.1} is solvable. Moreover, the optimal control $u^*$ satisfies the following time-varying bang-bang property:
\begin{equation}\label{1.14.3}
\|u^*(t)\|=M(t) \mbox{ for a.e. } t\in (\tau(\e), T).
\end{equation}

%Let $M(t)$ be given as before, and $\e>0$. Assume $\mathcal{R}(y_0,T)\cap\bar B(0,\e)\neq \emptyset$, then there exists exactly one optimal control $u^*\in\mathcal{U}_{M(t)}$ such that \eqref{1.14.1} holds. Moreover, the  optimal control $u^*$ satisfies the time-varying bang-bang property. i.e., 
%\begin{equation}\label{1.14.3}
%\|u^*(t)\|=M(t) \mbox{ for a.e. } t\in (\tau^*, T).
%\end{equation}
\end{theorem}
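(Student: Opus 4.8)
The plan is to reduce the problem to a metric–projection problem at the optimal time and then read off the bang–bang structure from the associated variational inequality. Throughout write $\tau^\ast:=\tau(\e)$ and, using Duhamel's formula, record the final state as
\[
y(T;\chi_{(\tau,T)}u,y_0)=e^{T\Delta}y_0+\int_\tau^T e^{(T-s)\Delta}\chi_\omega u(s)\,ds .
\]
Existence of an optimal time control $u^\ast\in\mathcal U_M$ at $\tau^\ast$ is supplied by Lemma~\ref{1.24.8}; set $y^\ast:=y(T;\chi_{(\tau^\ast,T)}u^\ast,y_0)$. First I would show $\|y^\ast\|=\e$, i.e.\ the optimal trajectory lands exactly on the sphere $\partial\bar B(0,\e)$. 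Indeed, if $\|y^\ast\|<\e$, then restricting $u^\ast$ to $(\tau,T)$ with $\tau\downarrow\tau^\ast$ changes the final state only by $\int_{\tau^\ast}^{\tau}e^{(T-s)\Delta}\chi_\omega u^\ast(s)\,ds$, whose norm is at most $\int_{\tau^\ast}^{\tau}M(s)\,ds\to0$; hence $\bar B(0,\e)$ would still be reachable for some $\tau>\tau^\ast$, contradicting the maximality of $\tau^\ast$. The same perturbation applied to an arbitrary control shows that no element of $\mathcal U_M$ reaches $B(0,\e)$ from time $\tau^\ast$, so $\inf_{u\in\mathcal U_M}\|y(T;\chi_{(\tau^\ast,T)}u,y_0)\|=\e$ and $y^\ast$ is a metric projection of the origin onto the closed convex reachable set $\mathcal R(y_0,\tau^\ast,T)$. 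Such a projection is unique, so $y^\ast$ does not depend on the chosen optimal control, which will ultimately give uniqueness.

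Next I would write down the first-order optimality condition. By the previous paragraph every optimal time control minimizes the convex functional $u\mapsto\|y(T;\chi_{(\tau^\ast,T)}u,y_0)\|^2$ over the convex set $\mathcal U_M$, so the variational inequality
\[
\int_{\tau^\ast}^{T}\big\langle \chi_\omega\varphi(s),\,v(s)-u^\ast(s)\big\rangle\,ds\ge0\qquad\text{for all }v\in\mathcal U_M
\]
holds, where $\varphi(s):=e^{(T-s)\Delta}y^\ast$ solves the adjoint (backward) heat equation with terminal datum $\varphi(T)=y^\ast$. Because the constraint $\|v(s)\|\le M(s)$ decouples in $s$, this integral inequality is equivalent to the pointwise statement that, for a.e.\ $s$, $u^\ast(s)$ minimizes the linear functional $v\mapsto\langle\chi_\omega\varphi(s),v\rangle$ on $\bar B(0,M(s))$. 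Whenever $\chi_\omega\varphi(s)\neq0$ this minimizer is unique and given explicitly by
\[
u^\ast(s)=-\,M(s)\,\frac{\chi_\omega\varphi(s)}{\|\chi_\omega\varphi(s)\|},
\]
so that $\|u^\ast(s)\|=M(s)$ on $\{\,s:\chi_\omega\varphi(s)\neq0\,\}$. Thus both the bang–bang identity \eqref{1.14.3} and the uniqueness of $u^\ast$ will follow once the set on which $\chi_\omega\varphi$ vanishes is shown to be negligible.

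The crux, and the step I expect to be the main obstacle, is therefore to prove that $\chi_\omega\varphi(s)\neq0$ for a.e.\ $s\in(\tau^\ast,T)$. My plan is to combine analyticity in time with unique continuation. Since $e^{t\Delta}$ is an analytic semigroup, $s\mapsto\varphi(s)$ is real-analytic from $(-\infty,T)$ into $L^2(\Omega)$, and hence $g(s):=\|\chi_\omega\varphi(s)\|^2$ is real-analytic on $(\tau^\ast,T)$. If $g$ vanished on a set of positive measure it would vanish identically, yielding $\varphi(s)|_\omega=0$ for every $s<T$; the unique continuation property for the heat equation then forces $\varphi\equiv0$ on $\Omega\times(\tau^\ast,T)$, whence $y^\ast=\lim_{s\to T^-}\varphi(s)=0$, contradicting $\|y^\ast\|=\e>0$. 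This gives \eqref{1.14.3} on a full-measure set. Finally, for uniqueness I would note that any two optimal time controls produce the same projected state $y^\ast$, hence the same adjoint $\varphi$, and therefore coincide almost everywhere through the explicit pointwise formula above.
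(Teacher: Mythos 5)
Your proposal is correct, and its overall skeleton matches the paper's: existence from Lemma \ref{1.24.8}, a small time-delay perturbation exploiting the maximality of $\tau(\e)$, a convex-duality argument turned into a pointwise maximum principle at Lebesgue points, and bang-bang from there. The execution differs in one substantive way. The paper separates the convex sets $\mathcal{R}(y_0,\tau(\e),T)$ and $\bar B(0,\e)$ by an abstract functional $\eta^*$ (see \eqref{7.9.1}) and concludes from
\begin{equation*}
\left\|e^{\Delta(T-t_0)}\chi_\omega M(t_0)\eta^*\right\|\leq \|\bar u^*(t_0)\|\left\|e^{\Delta(T-t_0)}\chi_\omega M(t_0)\eta^*\right\|
\end{equation*}
that $\|\bar u^*(t_0)\|\geq 1$; this last implication is valid only when $\chi_\omega e^{\Delta(T-t_0)}\eta^*\neq 0$, a point the paper never verifies. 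You instead realize the dual element concretely: you show every optimal control lands exactly on the sphere $\|y^*\|=\e$, identify $y^*$ as the unique metric projection of $0$ onto the closed convex reachable set (equivalently, one may take $\eta^*=-y^*$ in \eqref{7.9.1}), and derive the variational inequality with the adjoint state $\varphi(s)=e^{(T-s)\Delta}y^*$. Crucially, you then prove $\chi_\omega\varphi(s)\neq 0$ for a.e.\ $s$ via time-analyticity of the semigroup, unique continuation, and backward uniqueness, using $\|y^*\|=\e>0$. This supplies exactly the nonvanishing fact that the paper's inequality chain tacitly assumes, so your proof is in this respect more complete; it also yields uniqueness of $u^*$ directly from the explicit formula $u^*(s)=-M(s)\,\chi_\omega\varphi(s)/\|\chi_\omega\varphi(s)\|$, whereas the paper proves uniqueness only of the optimal final state and leaves uniqueness of the control implicit. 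The one step you should still write out in full is the passage from the integral variational inequality to the pointwise minimization over $\bar B(0,M(s))$: ``the constraint decouples in $s$'' is an assertion, and the actual argument is the Lebesgue-point/needle-variation computation that the paper carries out in its Step 2.
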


\ms

Now, we consider the following optimal target control problem:
\begin{equation}\label{1.24.111}
\e(\tau)=\inf\left\{\|y(T; \chi_{(\tau, T)}u, y_0)\|\bigm| u\in \mathcal{U}_{M}\right\}.
\end{equation}
Define 
\begin{equation*}
\e_T= \|y(T; \chi_{(0,T)}0, y_0)\|.
\end{equation*}
%Define the optimal target control problem as follows:
%\begin{equation}\label{1.24.111}
%\e(\tau)=\inf\left\{\|y(T; \chi_{(\tau, T)}u, y_0)\|\bigm| u\in \mathcal{U}_{M}\right\}.
%\end{equation}
%and
%\begin{equation}\label{1.24.222}
%\tau(\e)=\sup\left\{ \tilde \tau\in [0,T]\bigm| y(T; \chi_{(\tilde \tau, T)}u, y_0)\in \bar B(0,\e) \mbox{ with } u\in\mathcal{U}_{M}\right\}
%\end{equation}
It is obviously that $0\leq\e(\tau)\leq \e_T$ and $0\leq \tau(\e)\leq T$. As an application of the time-varying bang-bang property, we shall give our second main result: a kind of equivalence related to $\e(\tau)$ and $\tau(\e)$.

\begin{theorem}\label{1.14.4}
Let $M(\cd)\in L^\infty_+(0,T)$. Then the map $\tau\mapsto \e(\tau)$ is strictly monotonically increasing and continuous from $[0,T)$ onto $[\e(0), \e_T)$. Furthermore, it holds that 
\begin{equation}\label{1.14.5}
\e=\e(\tau(\e)), \e\in [\e(0), \e_T),\quad \mbox{ and }\quad \tau=\tau(\e(\tau)), \tau\in [0,T).
\end{equation}
Consequently, the maps $\tau\mapsto \e(\tau)$ and $\e\mapsto \tau(\e)$ are inverse of each other.
\end{theorem}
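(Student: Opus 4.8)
The plan is to study the geometry of the affine reachable sets $\mathcal{R}(y_0,\tau,T)=e^{T\Delta}y_0+L_\tau\,\mathcal{U}_M$, where $e^{t\Delta}$ denotes the Dirichlet heat semigroup (so that $y(T;\chi_{(\tau,T)}u,y_0)=e^{T\Delta}y_0+L_\tau u$) and $L_\tau u=\int_\tau^T e^{(T-s)\Delta}\chi_\omega u(s)\,ds$, and to read off all four assertions from three ingredients: compactness of $L_\tau$, uniqueness of the norm-minimizing state, and the bang--bang mechanism already behind Theorem~\ref{1.14.2}. The basic analytic input is that $L_\tau$ is compact on $\mathcal{U}_M$: the analytic--semigroup bound $\|e^{t\Delta}f\|_{H^1_0}\le Ct^{-1/2}\|f\|$ yields $\|L_\tau u\|_{H^1_0}\le C\int_\tau^T(T-s)^{-1/2}M(s)\,ds<\infty$, so the images lie in a bounded subset of $H^1_0(\O)$, precompact in $L^2(\O)$ by Rellich's theorem. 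Hence along any weak-$*$ convergent $u_n\rightharpoonup u$ in $\mathcal{U}_M$ one has $L_\tau u_n\to L_\tau u$ strongly; exactly as in Lemma~\ref{1.24.8} this makes the infimum defining $\e(\tau)$ attained, and the minimal-norm state $y^*(\tau)\in\mathcal{R}(y_0,\tau,T)$ unique, being the projection of $0$ onto a closed convex set.

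First I would record monotonicity, continuity, and the boundary values. Extending a control on $(\tau_2,T)$ by $0$ on $(\tau_1,\tau_2)$ shows $\mathcal{R}(y_0,\tau_2,T)\subseteq\mathcal{R}(y_0,\tau_1,T)$ for $\tau_1<\tau_2$, so $\e(\cd)$ is non-decreasing. For left-continuity at $\tau$ I would take optimal controls $u_n^*$ for $\tau_n\uparrow\tau$, pass to a weak-$*$ limit $w$ (supported in $(\tau,T)$ and still in $\mathcal{U}_M$), and use $L_{\tau_n}u_n^*\to L_\tau w$ to get $\e(\tau)\le\|e^{T\Delta}y_0+L_\tau w\|=\lim\e(\tau_n)$, the reverse inequality being monotonicity; right-continuity follows by restricting a fixed optimal control for $\tau$ to $(\tau_n,T)$ with $\tau_n\downarrow\tau$ and applying dominated convergence to $\int_{\tau_n}^T$. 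The endpoints are then immediate: $\e(\tau)\le\|e^{T\Delta}y_0\|=\e_T$ (take $u=0$), while $\|L_\tau u\|\le\int_\tau^T M(s)\,ds\to0$ forces $\e(\tau)\to\e_T$ as $\tau\to T^-$; combined with continuity and monotonicity this gives that $\e(\cd)$ maps $[0,T)$ onto $[\e(0),\e_T)$.

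The heart of the theorem is strict monotonicity, and this is where I expect the real work. Suppose $\e(\tau_1)=\e(\tau_2)=:c$ with $\tau_1<\tau_2$. Nestedness of the reachable sets and uniqueness of the projection force $y^*:=y^*(\tau_1)=y^*(\tau_2)$, so $y^*$ is reachable by some $u_2\in\mathcal{U}_M$ supported in $(\tau_2,T)$; extended by $0$, this $u_2$ is an optimal target control for $\tau_1$ that \emph{vanishes on} $(\tau_1,\tau_2)$. On the other hand, the variational inequality for the convex problem $\min\|e^{T\Delta}y_0+L_{\tau_1}u\|$ shows that every optimal target control $v^*$ for $\tau_1$ minimizes $s\mapsto\langle\varphi(s),v^*(s)\rangle$ pointwise, with $\varphi(s)=(L_{\tau_1}^*y^*)(s)=\chi_\omega e^{(T-s)\Delta}y^*$; hence $v^*(s)=-M(s)\varphi(s)/\|\varphi(s)\|$, so $\|v^*(s)\|=M(s)$ wherever $\varphi(s)\neq0$. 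Provided $y^*\neq0$, the unique continuation property for the heat equation (the same tool behind Theorem~\ref{1.14.2}) gives $\varphi(s)\neq0$ for a.e.\ $s$, so $v^*$ is bang--bang on all of $(\tau_1,T)$ and, being determined pointwise, unique; this contradicts the vanishing of $u_2$ on $(\tau_1,\tau_2)$ and proves $\e(\tau_1)<\e(\tau_2)$. The one delicate point is the non-degeneracy $y^*\neq0$, i.e.\ $\e(\tau)>0$ on $[0,T)$; this is exactly the ``approximately but not exactly null-controllable'' regime forced by the pointwise bound $\|u(t)\|\le M(t)$, and I would isolate it as the standing hypothesis that makes the bang--bang argument run — it is the main obstacle.

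Finally I would assemble the two identities of \eqref{1.14.5}. For $\e\in[\e(0),\e_T)$, existence of a time-optimal control (Theorem~\ref{1.14.2}) gives $\e(\tau(\e))\le\e$; were the inequality strict, right-continuity of $\e(\cd)$ would produce $\tau>\tau(\e)$ with $\e(\tau)\le\e$, contradicting the definition of $\tau(\e)$ as a supremum, so $\e(\tau(\e))=\e$. Conversely, fixing $\tau\in[0,T)$ and setting $c=\e(\tau)$, strict monotonicity gives $\{\sigma:\e(\sigma)\le c\}=[0,\tau]$, whence $\tau(c)=\tau$, that is $\tau(\e(\tau))=\tau$. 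These identities state precisely that $\tau\mapsto\e(\tau)$ and $\e\mapsto\tau(\e)$ are mutually inverse bijections between $[0,T)$ and $[\e(0),\e_T)$, which completes the proof.
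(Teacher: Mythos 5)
Your proposal is correct (modulo the non-degeneracy hypothesis you yourself flag) and takes a genuinely different route from the paper at the two key points. For strict monotonicity, the paper extends the optimal target control at $\tau_2$ by zero and then quotes Theorem \ref{1.14.2} for uniqueness and the bang-bang property; but Theorem \ref{1.14.2} concerns the \emph{time}-optimal problem, and a minimal-norm control for the target problem at $\tau_1$ is a time-optimal control for $\e=\e(\tau_1)$ only after one knows $\tau(\e(\tau_1))=\tau_1$ --- an identity the paper proves later (Step 3), using strict monotonicity itself. Your argument avoids this near-circularity: uniqueness of the minimal-norm \emph{state} comes from projection of $0$ onto the compact convex set $\mathcal{R}(y_0,\tau_1,T)$, and the bang-bang form $v^*(s)=-M(s)\varphi(s)/\|\varphi(s)\|$, $\varphi(s)=\chi_\omega e^{\Delta(T-s)}y^*$, of \emph{every} minimal-norm control comes directly from the first-order variational inequality plus unique continuation --- the same pointwise-optimization mechanism as in the paper's proof of Theorem \ref{1.14.2}, but applied intrinsically to the target problem, with the optimal state $y^*$ playing the role of the separating functional $\eta^*$. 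For continuity, your one-sided semicontinuity arguments are correct but longer than the paper's, which gets Lipschitz continuity in one line from $\left\|\int_\tau^{\hat\tau}e^{\Delta(T-\sigma)}\chi_\omega u(\sigma)\,d\sigma\right\|\le \|M\|_{L^\infty(0,T)}|\hat\tau-\tau|$; conversely, you prove that the range is all of $[\e(0),\e_T)$ (via $\e(\tau)\to\e_T$ as $\tau\to T^-$), a claim in the statement that the paper's proof never addresses. Your derivation of the two identities in \eqref{1.14.5} is essentially the paper's, with continuity replacing the paper's terser appeal to the definitions.

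The non-degeneracy point you flag --- $y^*\neq0$, equivalently $\e(\tau_1)>0$ --- is a genuine gap, but it is a gap in the theorem as stated and in the paper's own proof, not a defect introduced by your method. If $M(\cd)\equiv M_0$ with $M_0$ larger than the $L^\infty$ null-control cost for horizon $T/2$ applied to $\|y_0\|$, then exact null controllability of the heat equation gives $\e(\tau)=0$ for every $\tau\in[0,T/2]$, so $\tau\mapsto\e(\tau)$ is constant there and strict monotonicity, hence \eqref{1.14.5}, fails; consistently, Theorem \ref{1.14.2}, which the paper's Step 1 invokes, is itself stated only for $\e>0$. So a hypothesis excluding exact null controllability under the constraint must be added, e.g.\ $\e(0)>0$; by (weak) monotonicity this gives $\e(\tau)\ge\e(0)>0$ for every $\tau$, so in your argument $\|y^*\|=\e(\tau_1)>0$ holds automatically and the delicate point discharges itself. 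With that single added hypothesis your proof is complete and, at the strict-monotonicity step, more rigorous than the paper's.
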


When $M(\cd)\equiv M_0$ ($M_0>0$ is a constant), a kind of equivalence theorem of optimal time and target control problems has been discussed in \cite{Wang-Xu13}. In our work, for the time variant function $M(\cdot)$, we can also obtain that equivalence result. 
\ms

We organize this paper as follows. In Section 2, we prove the time-varying bang-bang property (Theorem \ref{1.14.2}). In Section 3, we prove the equivalence theorem of optimal time and target control problems (Theorem \ref{1.14.4}).

\section{Proof of Theorem \ref{1.14.2}}

The following lemma is crucial in the proof of Theorem \ref{1.14.2}.

\begin{lemma}\label{1.24.8}
Under the assumption of Theorem \ref{1.14.2}, let $\tau(\e)$ be defined as \eqref{1.14.1}. Then there exists an optimal control $u^*\in\mathcal{U}_{M}$, such that the optimal time problem \eqref{1.14.1} is solvable, i.e.,
\begin{equation*}
y(T; \chi_{(\tau(\e),T)}u^*, y_0)\in \bar B(0,\e).
\end{equation*} 
\end{lemma}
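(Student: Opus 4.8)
The plan is to establish existence of an optimal time control by the direct method of the calculus of variations, working with a maximizing sequence for the supremum defining $\tau(\e)$. First I would let $\{\tau_n\}\subset[0,T]$ be a sequence of admissible stopping times with $\tau_n\uparrow\tau(\e)$, meaning for each $n$ there is a control $u_n\in\mathcal{U}_M$ with $y(T;\chi_{(\tau_n,T)}u_n,y_0)\in\bar B(0,\e)$. Since each $u_n$ satisfies $\|u_n(t)\|\le M(t)$ a.e.\ with $M(\cdot)\in L^\infty_+(0,T)$, the sequence $\{u_n\}$ is bounded in $L^\infty(0,T;L^2(\Omega))$, which is the dual of $L^1(0,T;L^2(\Omega))$. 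Hence by the Banach--Alaoglu theorem I may extract a subsequence (not relabeled) converging weak-$*$ to some $u^*\in L^\infty(0,T;L^2(\Omega))$.

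The key steps are then to verify that the weak-$*$ limit $u^*$ remains admissible and that it drives the state into the target ball at time $\tau(\e)$. For admissibility, the set $\mathcal{U}_M$ is convex and closed in the weak-$*$ topology (the pointwise norm constraint $\|v(t)\|\le M(t)$ defines a convex set that is sequentially weak-$*$ closed), so $u^*\in\mathcal{U}_M$; I would make this precise by testing against suitable elements of $L^1(0,T;L^2(\Omega))$ or invoking Mazur's lemma to pass from weak-$*$ to strong convergence of convex combinations. The more delicate point is handling the moving control support $\chi_{(\tau_n,T)}$: I would argue that $\chi_{(\tau_n,T)}u_n\rightharpoonup \chi_{(\tau(\e),T)}u^*$ weakly (using $\tau_n\to\tau(\e)$ together with the uniform $L^\infty$ bound to control the thin time-strips $(\tau_n,\tau(\e))$), and then use the linearity and continuity of the solution map. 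Concretely, since the control-to-state map $v\mapsto y(T;\chi_{(\tau,T)}v,y_0)$ for the linear heat equation is continuous from the weak-$*$ topology of $L^\infty(0,T;L^2(\Omega))$ into $L^2(\Omega)$ (indeed the solution at time $T$ depends continuously, even compactly via the smoothing of the heat semigroup, on the forcing), the states $y(T;\chi_{(\tau_n,T)}u_n,y_0)$ converge to $y(T;\chi_{(\tau(\e),T)}u^*,y_0)$ in $L^2(\Omega)$.

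Finally, since the closed ball $\bar B(0,\e)$ is closed under this convergence, the limit state lies in $\bar B(0,\e)$, which shows that the supremum $\tau(\e)$ is attained and that $u^*$ is the desired optimal time control. I would also note that $\tau(\e)<T$ is guaranteed by the hypothesis $\mathcal{R}(y_0,T)\cap\bar B(0,\e)\neq\emptyset$, so the problem is nontrivial and the maximizing sequence is well defined.

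I expect the main obstacle to be the treatment of the time-dependent control support as $\tau_n$ varies: one must ensure that passing to the limit in $\chi_{(\tau_n,T)}u_n$ does not lose or create mass on the shrinking intervals between $\tau_n$ and $\tau(\e)$, and that the weak-$*$ convergence of the controls interacts correctly with the multiplication by the moving indicator $\chi_{(\tau_n,T)}$. The uniform bound $\|u_n(t)\|\le M(t)\le\|M\|_{L^\infty}$ is the crucial tool here, since it forces the $L^2(\Omega)$-norm of the state contribution from any time-strip of length $|\tau_n-\tau(\e)|$ to be controlled by $\|M\|_{L^\infty}|\tau_n-\tau(\e)|^{1/2}$ (via Cauchy--Schwarz in time and boundedness of the semigroup), which vanishes as $n\to\infty$. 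The smoothing/compactness of the heat semigroup makes the convergence of the final states robust, so once the moving-support issue is handled the rest is routine.
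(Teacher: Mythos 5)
Your proposal is correct and takes essentially the same route as the paper: a maximizing sequence $\tau_n\uparrow\tau(\e)$ with controls $u_n\in\mathcal{U}_M$, weak-$*$ compactness in $L^\infty(0,T;L^2(\Omega))$, passage to the limit in the state, and closedness of $\bar B(0,\e)$. The two verification steps you flag are handled in the paper by the same ideas you sketch --- the moving support $\chi_{(\tau_n,T)}u_n$ is treated by extending $u_n$ by zero and showing, via a Lebesgue-point argument, that the weak-$*$ limit vanishes on $(0,\tau(\e))$, and admissibility $\|u^*(t)\|\le M(t)$ is proved by the same $L^1$-duality you invoke, made explicit as a contradiction argument with the test function $\chi_{E_0}u^*/\|u^*\|$.
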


\begin{proof}
Since $\mathcal{R}(y_0,T)\cap\bar B(0,\e)\neq \emptyset$, there exist $\tau_0\in (0,T)$ and $u\in \mathcal{U}_M$, such that $y(T; \chi_{(\tau_0,T)}u, y_0)\in \bar B(0,\e)$. 
Let $\{\tau_n\}_{n=1}^\infty$ be a monotonically increasing sequence such that $\tau_n\rightarrow \tau(\e)$. Then, for each $n\in\mathbb{N}$, there exists $u_n\in \mathcal{U}_{M}$ such that 
\begin{equation*}
y(T; \chi_{(\tau_n, T)}u_n, y_0)\in \bar B(0,\e).
\end{equation*}
Set 
\begin{equation*}
\tilde u_n(t)=
\begin{cases}
0, & \mbox{ if } t\in (0, \tau_n],\\
u_n(t), &\mbox{ if } t\in (\tau_n, T).
\end{cases}
\end{equation*}
Since $\|\tilde u_n\|_{L^\infty(0,T; L^2(\Omega))}\leq \|M\|_{L^\infty(0,T)}$, there exist a subsequence of $\{\tilde u_n\}$, still denoted by itself, and $ u^*\in L^\infty(0,T; L^2(\Omega))$ such that 
\begin{equation}\label{1.15.1}
\tilde u_n\rightarrow u^* \mbox{ weakly}^* \mbox{ in } L^\infty(0,T; L^2(\Omega)).
\end{equation}
Take $t_0\in (0,\tau(\e))$ to be the Lebesgue point of $u^*$, and $\lambda\in(0,\frac{\tau(\e)-t_0}{2})$. Then there exists $N_0\in\mathbb{N}$ such that $t_0+\lambda\leq \tau_n$ for all $n\geq N_0$. For any $\zeta\in L^2(\Omega)$, set
\begin{equation*}
v(x,t)=\chi_{(t_0-\lambda, t_0+\lambda)}(t)\zeta(x).
\end{equation*}
Then $\|v\|_{L^1(0,T; L^2(\Omega))}=2\lambda\|\zeta\|<\infty$, and 
\begin{equation*}
\begin{split}
\int_{t_0-\lambda}^{t_0+\lambda} \langle  u^*(t), \zeta\rangle dt
&= \int_0^T\langle  u^*(t), \chi_{(t_0-\lambda, t_0+\lambda)}\zeta\rangle dt\\
&=\int_0^T\langle  u^*(t), v(t)\rangle dt\\
&=\lim_{n\rightarrow\infty}\int_0^T \langle \tilde u_n(t), v(t)\rangle dt\\
&=0.
\end{split}
\end{equation*}
Hence 
\begin{equation*}
\langle u^*(t_0), \zeta\rangle=\lim_{\lambda\rightarrow0}\frac{1}{2\lambda}\int_{t_0-\lambda}^{t_0+\lambda} \langle  u^*(t), \zeta\rangle dt=0.
\end{equation*}
By the arbitrary of $\zeta\in L^2(\Omega)$, we get $\tilde u^*(t_0)=0$. Since the Lebesgue measure of the set of $u^*$'s Lebesgue points in $(0,\tau(\e))$ is equal to $\tau(\e)$, we have 
\begin{equation}\label{1.15.2}
 u^*|_{(0,\tau(\e))}=0.
\end{equation} 

\ms

On the other side, by \eqref{1.15.1}, the solution $y^*(\cdot; \chi_{(\tau(\e),T)}u^*, y_0)$ to 
\begin{equation*}
\begin{cases}
\partial_t y^*(x,t)-\Delta y^*(x,t)=\chi_\omega\chi_{(\tau(\e),T)}u^*(x,t), &\mbox{ in } \Omega\times (0,T),\\
y^*(x,t)=0, &\mbox{ on }\partial \Omega\times (0,T),\\
y^*(x,0)=y_0(x), &\mbox{ in } \Omega,
\end{cases}
\end{equation*}
satisfies
\begin{equation}\label{1.15.3}
\begin{split}
y_n\rightarrow y^* & \mbox{ weakly in } L^2(0,T; H_0^1(\Omega))\cap H^1(0,T; L^2(\Omega)),\\
& \mbox{ strongly in } C([\delta, T]; L^2(\Omega)) \mbox{ as } n\rightarrow \infty,
\end{split}
\end{equation}
for any $0<\delta<T$. Here $y_n\equiv y_n(\cdot; \chi_{(\tau_n, T)}u_n, y_0)$ is the solution to the system
\begin{equation*}
\begin{cases}
\partial_t y_n(x,t)-\Delta y_n(x,t)=\chi_\omega\chi_{(\tau_n, T)} \tilde u_n(x,t), &\mbox{ in }\Omega\times (0,T),\\
y_n(x,t)=0, &\mbox{ on } \partial\Omega\times (0,T),\\
y_n(x,0)=y_0(x), &\mbox{ in }\Omega.
\end{cases}
\end{equation*}
Since $y_n(T; \chi_{(\tau_n,T)}u_n, y_0)\in \bar B(0,\e)$, we get $y^*(T; \chi_{(\tau(\e), T)}u^*, y_0)\in \bar B(0,\e)$ by \eqref{1.15.3}, which implies the optimal time $\tau(\e)$ is attainable and the optimal control $u^*$ exits. 

\ms

We claim that
 $\|u^*(t)\|\leq M(t)$ for a.e. $t\in (\tau(\e), T)$.

Indeed, if there exists $\e_0>0$ and  $E_0\subset [\tau(\e),T]$ with $|E_0|>0$ such that  
\begin{equation*}
\|u^*(t)\|>M(t)+\e_0,\ \forall t\in E_0,
\end{equation*}
where $|E_0|$ represents the Lebesgue measure of $E_0$. 
Define 
\begin{equation*}
\zeta(x,t)=\chi_{_{E_0}}\frac{u^*(t)}{\|u^*(t)\|}.
\end{equation*}
It is obviously that $\zeta(x,t)$ is well-defined since $\|u^*(\cdot)\|>M(\cdot)+\e_0>0$ in $E_0$, and 
\begin{equation*}
\|\zeta\|_{L^1(\tau(\e), T; L^2(\Omega))}=\int_{\tau(\e)}^T\|\zeta(t)\| dt=\int_{\tau(\e)}^T\left\|\chi_{_{E_0}}\frac{u^*(t)}{\|u^*(t)\|}\right\|dt=|E_0|<\infty,
\end{equation*}
i.e., $\zeta\in L^1(\tau(\e),T; L^2(\Omega))$. 

Since $\tilde u_n\rightarrow \tilde u^*$ weakly$^*$ in $L^\infty(0, T; L^2(\Omega))$, we get $u_n\rightarrow u^*$ weakly$^*$ in $L^\infty(\tau(\e),T; L^2(\Omega))$. For any $\epsilon\in(0,\frac{|E_0|\e_0}{2})$, there exists $N_0>0$, such that, for any $n\geq N_0$,   
\begin{equation}\label{1.15.5}
\left|\int_{\tau(\e)}^T \langle u_n-u^*, \zeta\rangle dt\right|<\epsilon.
\end{equation}
Noting 
\begin{equation*}
\|\zeta(t)\|=\left\|\chi_{_{E_0}}\frac{u^*(t)}{\|u^*(t)\|}\right\|=\chi_{_{E_0}},
\end{equation*}
we obtain
\begin{equation*}
\begin{split}
\left|\int_{\tau(\e)}^T\langle u_n-u^*, \zeta\rangle dt\right|
 =& \left|\int_{\tau(\e)}^T\langle u_n,\zeta\rangle dt-\int_{\tau(\e)}^T\langle u^*,\zeta\rangle dt\right|\\
 \geq& \left|\int_{\tau(\e)}^T\langle u^*,\zeta\rangle dt\right|-\left|\int_{\tau(\e)}^T\langle u_n,\zeta\rangle dt\right|\\
 \geq& \int_{\tau(\e)}^T\left\langle u^*, \chi_{E_0}\frac{u^*(t)}{\|u^*(t)\|}\right\rangle dt-\int_{\tau(\e)}^T\|u_n(t)\|\|\zeta(t)\|dt\\
 =& \int_{E_0}\|u^*(t)\|dt-\int_{E_0}\|u_n(t)\|dt\\
 \geq& \int_{E_0}(M(t)+\e_0)dt-\int_{E_0}M(t)dt\\
 =& |E_0|\e_0.
\end{split}
\end{equation*}
This contradicts \eqref{1.15.5}.
That proves our claim, and completes the proof.
\end{proof}

Now we can prove Theorem \ref{1.14.2}.

\begin{proof}[Proof of Theorem \ref{1.14.2}]
The proof is long, we separate it to two steps.

\ms

{\bf Step 1}. By Lemma \ref{1.24.8}, one knows that $\mathcal{R}(y_0,\tau(\e),T)\cap \bar B(0,\e)\neq \emptyset$. We now show that $\mathcal{R}(y_0,\tau(\e),T)\cap \bar B(0,\e)$ has only one point. 

\ms

Otherwise, there exists at least two different  $u_1^*, u_2^*\in \mathcal{U}_{M}$ such that 
\begin{equation*}
y_1\equiv y(T; \chi_{(\tau(\e),T)}u_1^*, y_0), y_2\equiv y(T; \chi_{(\tau(\e),T)}u_2^*, y_0)\in \bar B(0,\e)
\end{equation*}
and 
\begin{equation*}
y_1\neq y_2.
\end{equation*}
Note that $\ds \hat u=\frac{u_1^*}{2}+\frac{u_2^*}{2}\in \mathcal{U}_{M}$ and $\ds\hat y(\cdot)=\frac{1}{2}y_1(\cdot; \chi_{(\tau(\e),T)}u_1^*, y_0)+\frac{1}{2}y(\cdot; \chi_{(\tau(\e),T)}u_2^*, y_0)$ is the solution to the system
\begin{equation*}
\begin{cases}
\partial_t \hat y(x,t)-\Delta \hat y(x,t)=\chi_\omega\chi_{(\tau(\e),T)}\hat u(x,t), &\mbox{ in } \Omega\times (0,T),\\
\hat y(x,t)=0, &\mbox{ on }\partial \Omega\times (0,T),\\
\hat y(x,0)=y_0(x), &\mbox{ in } \Omega.
\end{cases}
\end{equation*}
One can easily get $\ds\hat y(T)=\frac{y_1}{2}+\frac{y_2}{2}\in \bar B(0,\e)$. Since $\bar B(0,\e)\subset L^2(\Omega)$ is strictly convex, we get $\hat y(T)$ is an inner point of $\bar B(0,\e)$. Hence there exists $\gamma>0$ such that $B(\hat y(T), \gamma)\subset B(0,\e)$. Let $\tilde y$ be the solution to  the following system
\begin{equation*}
\begin{cases}
\partial_t \tilde y(x,t)-\Delta \tilde y(x,t)=\chi_\omega\chi_{(\tau(\e)+\xi,T)}\hat u(x,t), &\mbox{ in } \Omega\times (0,T),\\
\tilde y(x,t)=0, &\mbox{ on }\partial \Omega\times (0,T),\\
\tilde y(x,0)=y_0(x), &\mbox{ in } \Omega.
\end{cases}
\end{equation*}
Then 
\begin{equation*}
h_\xi\equiv \hat y(T)-\tilde y(T)=\int_{\tau(\e)}^{\tau(\e)+\xi} e^{\Delta (T-\sigma)}\chi_\omega \hat u(\sigma)d \sigma.
\end{equation*}
Choosing $\xi>0$ small enough such that $\|h_\xi\|<\gamma$, one has $\tilde y(T)\in \bar B(0,\e)$. That implies $\tau(\e)\geq \tau(\e)+\xi$, which is impossible. That completes the proof of Step 1. 

\ms

{\bf Step 2}. The  optimal time control $u^*$ has the time-varying bang-bang property.

\ms

Since $\mathcal{R}(y_0,\tau(\e), T)\cap \bar B(0,\e)$ has only one point (denoted by $y^*=y(T; u^*, y_0)$),  and $\mathcal{R}(y_0,\tau(\e),T)$ and $\bar B(0,\e)$ are two convex sets, 
by hyperplane separation theorem, there exists $\eta^*\in L^2(\Omega)$ such that 
\begin{equation}\label{7.9.1}
\sup_{y\in \mathcal{R}(y_0,\tau(\e),T)}\langle y, \eta^*\rangle\leq \inf_{z\in \bar B(0,\e)} \langle z, \eta^*\rangle\leq \langle y^*, \eta^*\rangle.
\end{equation}
Notice that the element in $\mathcal{R}(y_0,\tau(\e),T)$ can be written by
\begin{equation*}
y(T; \chi_{(\tau(\e),T)} u, y_0)= e^{\Delta T}y_0+\int_0^{T} e^{\Delta(T-\sigma)} \chi_\omega \chi_{(\tau(\e),T)}u(\sigma)\df \sigma.
\end{equation*}
Then by \eqref{7.9.1}, one can get
\begin{equation*} 
\sup_{\bar u\in\mathcal{U}_1}\int_{\tau(\e)}^{T}\left\langle e^{\Delta(T-\sigma)}\chi_\omega M(\sigma)\bar u(\sigma), \eta^*\right\rangle\df \sigma\leq  \int_{\tau(\e)}^{T}\left\langle e^{\Delta(T-\sigma)}\chi_\omega M(\sigma)\bar u^*(\sigma), \eta^*\right\rangle\df \sigma,
\end{equation*}
i.e.,
\begin{equation}\label{wanga1} 
\sup_{\bar u\in\mathcal{U}_1}\int_{\tau(\e)}^{T}\left\langle \bar u(\sigma), e^{\Delta(T-\sigma)}\chi_\omega M(\sigma)\eta^*\right\rangle\df \sigma\leq  \int_{\tau(\e)}^{T}\left\langle \bar u^*(\sigma), e^{\Delta(T-\sigma)}\chi_\omega M(\sigma)\eta^*\right\rangle\df \sigma.
\end{equation}
Here 
\begin{equation*}
\bar u^*\in\mathcal{U}_1\equiv \left\{\bar u\in L^\infty(\tau(\e),T; L^2(\Omega))\bigm| \|\bar u(t)\|_{L^2(\Omega)}\leq 1 \mbox{ for a.e. } t\in [\tau(\e), T]\right\},
\end{equation*}
and 
\begin{equation}\label{7.9.2}
u^*(t)=M(t)\bar u^*(t) \mbox{ for all } t\in [\tau(\e),T].
\end{equation}
%Hence,  we have 
%\begin{equation*} 
%\sup_{\bar u\in\mathcal{U}_1}\int_{\tau(\e)}^{T}\left\langle \bar u(\sigma), e^{\Delta(T-\sigma)}\chi_\omega M(\sigma)\eta^*\right\rangle\df \sigma\leq  \int_{\tau(\e)}^{T}\left\langle \bar u^*(\sigma), e^{\Delta(T-\sigma)}\chi_\omega M(\sigma)\eta^*\right\rangle\df \sigma.
%\end{equation*}

Let $E_0$ be the Lebesgue points of $\bar u^*(\cdot)$ in $[\tau(\e), T]$. For given $t_0\in E_0$, choosing 
\begin{equation*}
\bar u(t)=
\begin{cases}
\bar u^*(t), &\mbox{ for } t\in (\tau(\e), T)\setminus (t_0-\lambda, t_0+\lambda),\\
\zeta, &\mbox{ for } t\in (t_0-\lambda, t_0+\lambda)\subset (\tau(\e), T),
\end{cases}
\end{equation*}
where $\zeta\in L^2(\O)$ with $\|\zeta\|\leq 1$, and $\l\in (0,\min\{t_0-\tau(\e),T-t_0\})$. Setting $A=(\tau(\e), T)\setminus(t_0-\lambda, t_0+\lambda)$, by \eqref{wanga1} we have 
\begin{equation*}
\begin{split}
 &\int_A\langle \bar u^*(t), e^{\Delta (T-\sigma)}M(\sigma)\eta^*\rangle d\sigma+\int_{t_0-\lambda}^{t_0+\lambda}\langle \zeta, e^{\Delta (T-\sigma)}\chi_\omega M(\sigma)\eta^*\rangle d\sigma\\
 &\leq \int_A\langle \bar u^*(t), e^{\Delta (T-\sigma)}M(\sigma)\eta^*\rangle d\sigma+\int_{t_0-\lambda}^{t_0+\lambda}\langle \bar u^*(\sigma), e^{\Delta(T-\sigma)}\chi_\omega M(\sigma)\eta^*\rangle d\sigma,
\end{split}
\end{equation*}
i.e.,
\begin{equation*}
\int_{t_0-\lambda}^{t_0+\lambda}\langle \zeta, e^{\Delta (T-\sigma)}\chi_\omega M(\sigma)\eta^*\rangle d\sigma\leq \int_{t_0-\lambda}^{t_0+\lambda}\langle \bar u^*(\sigma), e^{\Delta(T-\sigma)}\chi_\omega M(\sigma)\eta^*\rangle d\sigma.
\end{equation*}
Hence
\begin{equation*}
\begin{split}
 &\langle \zeta, e^{\Delta (T-t_0)}\chi_\omega M(t_0)\eta^*\rangle\\
 &=\lim_{\lambda\rightarrow0}\frac{1}{2\lambda}\int_{t_0-\lambda}^{t_0+\lambda}\langle \zeta, e^{\Delta (T-\sigma)}\chi_\omega M(\sigma)\eta^*\rangle d\sigma\\
 &\leq \lim_{\lambda\rightarrow0}\frac{1}{2\lambda}\int_{t_0-\lambda}^{t_0+\lambda}\langle \bar u^*(\sigma), e^{\Delta(T-\sigma)}\chi_\omega M(\sigma)\eta^*\rangle d\sigma\\
 &=\langle \bar u^*(t_0), e^{\Delta(T-t_0)}\chi_\omega M(t_0)\eta^*\rangle.
\end{split}
\end{equation*}
By the arbitrary of $\zeta$, we get 
\begin{equation*}
\sup_{\|\zeta\|\leq 1} \left\langle \zeta, e^{\Delta(T-t_0)}\chi_\omega M(t_0)\eta^*\right\rangle\leq \left\langle \bar u^*(t_0), e^{\Delta(T-t_0)}\chi_\omega M(t_0)\eta^*\right\rangle,
\end{equation*}
i.e., 
\begin{equation*}
\begin{split}
\left\|e^{\Delta(T-t_0)}\chi_\omega M(t_0)\eta^*\right\|_{L^2(\Omega)}
 &=\sup_{\|\zeta\|\leq 1} \left\langle \zeta, e^{\Delta(T-t_0)}\chi_\omega M(t_0) \eta^*\right\rangle\\
 &\leq \left\langle \bar u^*(t_0), e^{\Delta(T-t_0)}\chi_\omega M(t_0) \eta^*\right\rangle\\
 &\leq \|\bar u^*(t_0)\|_{L^2(\Omega)}\left\|e^{\Delta(T-t_0)}\chi_\omega M(t_0) \eta^*\right\|_{L^2(\Omega)}.
\end{split}
\end{equation*}
This implies that 
\begin{equation}\label{wang11}
\|\bar u^*(t_0)\|_{L^2(\Omega)}\geq 1.
\end{equation}
By $\bar u^*\in \mathcal{U}_1$ we get $\|\bar u^*(t_0)\|=1$. \eqref{wang11}, together with \eqref{7.9.2} and $|E_0|=\hat\tau(\e)$, yields 
\begin{equation*}
\|u^*(t)\|_{L^2(\Omega)}=M(t) \mbox{ for a.e. } t\in [0,\hat\tau(\e)].
\end{equation*}
From above, we get the time optimal control $u^*$ satisfies  the time-varying bang-bang property. 
That completes the proof.
\end{proof}

\section{Proof of Theorem \ref{1.14.4}}

In order to show Theorem \ref{1.14.4}, we need a lemma in the following:

\begin{lemma}\label{1.24.6}
Let $\e(\tau)$ be defined as \eqref{1.24.111}. Then there exists $u^*\in\mathcal{U}_{M}$ such that 
\begin{equation*}
y(T; \chi_{( \tau, T)}u^*, y_0)\in \bar B(0,\e(\tau)). 
\end{equation*}
\end{lemma}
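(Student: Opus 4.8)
The plan is to prove this existence statement by the direct method of the calculus of variations, running in close parallel to Lemma~\ref{1.24.8}. First I would fix $\tau\in[0,T)$ and pick a minimizing sequence $\{u_n\}\subset\mathcal{U}_{M}$ for \rf{1.24.111}, so that
\[
\|y(T; \chi_{(\tau, T)}u_n, y_0)\|\longrightarrow \e(\tau)\qquad\text{as }n\to\infty.
\]
Since each $u_n$ obeys $\|u_n(t)\|\le M(t)\le\|M\|_{L^\infty(0,T)}$ for a.e. $t$, the sequence is bounded in $L^\infty(0,T; L^2(\Omega))$. This space is the dual of the separable space $L^1(0,T; L^2(\Omega))$, so the Banach--Alaoglu theorem supplies a subsequence (still denoted $\{u_n\}$) and a limit $u^*\in L^\infty(0,T; L^2(\Omega))$ with $u_n\rightharpoonup u^*$ weakly$^*$.

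The next step is to check that the weak$^*$ limit is admissible, i.e. $u^*\in\mathcal{U}_{M}$. This is exactly the claim established at the end of the proof of Lemma~\ref{1.24.8}: were $\|u^*(t)\|>M(t)+\e_0$ on a set $E_0\subset(\tau,T)$ of positive measure, then testing the weak$^*$ convergence against the $L^1(\tau,T;L^2(\Omega))$ function $\zeta(\cdot)=\chi_{E_0}u^*(\cdot)/\|u^*(\cdot)\|$ leads to a contradiction, so $\|u^*(t)\|\le M(t)$ for a.e. $t\in(\tau,T)$. I would then pass to the limit in the state using the mild representation
\[
y(T; \chi_{(\tau, T)}u, y_0)=e^{\Delta T}y_0+\int_{\tau}^{T} e^{\Delta(T-\sigma)}\chi_\omega u(\sigma)\df\sigma.
\]
For any $z\in L^2(\Omega)$ the map $\sigma\mapsto\chi_\omega e^{\Delta(T-\sigma)}z$ is bounded on $[\tau,T]$, hence lies in $L^1(\tau,T; L^2(\Omega))$; weak$^*$ convergence then yields $\langle y(T; \chi_{(\tau, T)}u_n, y_0),z\rangle\to\langle y(T; \chi_{(\tau, T)}u^*, y_0),z\rangle$, that is, $y(T; \chi_{(\tau, T)}u_n, y_0)\rightharpoonup y(T; \chi_{(\tau, T)}u^*, y_0)$ weakly in $L^2(\Omega)$.

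Finally, weak lower semicontinuity of the $L^2(\Omega)$ norm gives
\[
\|y(T; \chi_{(\tau, T)}u^*, y_0)\|\le\liminf_{n\to\infty}\|y(T; \chi_{(\tau, T)}u_n, y_0)\|=\e(\tau),
\]
while $u^*\in\mathcal{U}_{M}$ forces the reverse inequality by the very definition of the infimum \rf{1.24.111}; hence $y(T; \chi_{(\tau, T)}u^*, y_0)\in\bar B(0,\e(\tau))$, as required. The one genuinely delicate point---and the step I expect to be the main obstacle---is the admissibility of the weak$^*$ limit, i.e. the weak$^*$ closedness of $\mathcal{U}_{M}$; this is where the time-varying bound $M(\cdot)$ matters and the constant-bound reasoning of \cite{Wang08} does not directly apply. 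The compactness, the weak continuity of the linear input-to-state map at time $T$, and the lower semicontinuity of the norm are all routine once the smoothing representation of the heat semigroup is invoked.
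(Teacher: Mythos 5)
Your proof is correct, and its skeleton is the same as the paper's: a minimizing sequence for \rf{1.24.111}, weak$^*$ compactness, and admissibility of the weak$^*$ limit by exactly the contradiction argument (testing against $\zeta=\chi_{E_0}u^*/\|u^*\|\in L^1$) that the paper itself runs --- indeed the paper repeats that argument inside Lemma \ref{1.24.6} rather than citing Lemma \ref{1.24.8}, and like you it identifies this admissibility step as the point where the time-varying bound $M(\cdot)$ matters. Where you genuinely diverge is the limit passage in the state: the paper invokes parabolic regularity to get $y(\cdot;\chi_{(\tau,T)}u_n,y_0)\to y(\cdot;\chi_{(\tau,T)}u^*,y_0)$ weakly in $L^2(0,T;H^1_0(\Omega))\cap H^1(0,T;L^2(\Omega))$ and strongly in $C([\delta,T];L^2(\Omega))$, so the terminal norms converge and attainment follows outright; you instead use the Duhamel representation, self-adjointness of $e^{\Delta(T-\sigma)}$, and the fact that $\sigma\mapsto\chi_\omega e^{\Delta(T-\sigma)}z$ lies in $L^1(\tau,T;L^2(\Omega))$, obtaining only weak convergence of the terminal states in $L^2(\Omega)$, and then close with weak lower semicontinuity of the norm plus the definition of the infimum. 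Your route is more elementary and self-contained (it avoids the compactness assertion the paper states without proof), at the cost of an extra, trivial, reverse-inequality step; both are valid. One small slip to fix: you extract the weak$^*$ limit in $L^\infty(0,T;L^2(\Omega))$ claiming the bound $\|u_n(t)\|\le M(t)$ there, but the constraint defining $\mathcal{U}_M$ holds only on $(\tau,T)$, so the sequence need not be uniformly bounded on $(0,\tau)$; either work in $L^\infty(\tau,T;L^2(\Omega))$ as the paper does, or first replace $u_n$ by $\chi_{(\tau,T)}u_n$, which changes neither admissibility nor the state.
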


\begin{proof}
Let $\{u_n\}_{n=1}^\infty\subset\mathcal{U}_{M}$ be the minimal sequence of \eqref{1.24.111}, i.e., 
\begin{equation*}
\|y(T; \chi_{(\tau, T)}u_n, y_0)\|\rightarrow \e(\tau),\ \mbox{as}\  n\rightarrow \infty.
\end{equation*}
Here $y(\cdot; \chi_{(\tau, T)}u_n, y_0)$ is the solution to the system
\begin{equation*}
\begin{cases}
\partial_t y(x,t)-\Delta y(x,t)=\chi_\omega\chi_{(\tau,T)}u_n(x,t), &\mbox{ in }\Omega\times (0,T),\\
y(x,t)=0, &\mbox{ on }\partial\Omega\times (0,T),\\
y(x,0)=y_0(x), &\mbox{ in } \Omega.
\end{cases}
\end{equation*}
%and $y_0\in L^2(\Omega)$ has been defined as above. 
Since 
\begin{equation*}
\|u_n(t)\|\leq M(t)\leq\|M\|_{L^\infty(0,T)} \mbox{ for a.e. } t\in (\tau, T),
\end{equation*}
there exist a subsequence of $\{u_n\}_{n=1}^\infty$, still denoted by itself, and $u^*\in L^\infty(\tau, T; L^2(\Omega))$ such that 
\begin{equation*}
u_n\rightarrow u^* \mbox{ weakly}^* \mbox{ in } L^\infty(\tau, T; L^2(\Omega)).
\end{equation*}
Therefore, we have 
\begin{equation*}
\begin{split}
y(\cdot; \chi_{(\tau, T)}u_n, y_0)\rightarrow y(\cdot; \chi_{(\tau, T)}u^*, y_0) & \mbox{ weakly in } L^2(0,T; H_0^1(\Omega))\cap H^1(0,T; L^2(\Omega)),\\
& \mbox{ strongly in } C([\delta, T]; L^2(\Omega)) \mbox{ as } n\rightarrow \infty,
\end{split}
\end{equation*}
for any $\delta\in(0,T]$. Here $y(\cdot; \chi_{(\tau, T)}u^*, y_0)$ is the solution to the system
\begin{equation*}
\begin{cases}
\partial_t y(x,t)-\Delta y(x,t)=\chi_\omega\chi_{(\tau,T)}u^*(x,t), &\mbox{ in }\Omega\times (0,T),\\
y(x,t)=0, &\mbox{ on }\partial\Omega\times (0,T),\\
y(x,0)=y_0(x), &\mbox{ in } \Omega.
\end{cases}
\end{equation*}
Hence, 
\begin{equation*}
\|y(T; \chi_{(\tau, T)}u^*,y_0)\|=\e(\tau).
\end{equation*}

\ms

Now, we shall show that $u^*\in \mathcal{U}_{M}$. 

\ms

By contradiction. We assume there exist $\epsilon_0>0$ and $E_0\subset (\tau, T)$ with $|E_0|>0$ such that 
\begin{equation*}
u^*(t)\geq M(t)+\epsilon_0 \mbox{ for all } t\in E_0.
\end{equation*}
 Then
\begin{equation}\label{wang1}
\int_{E_0}\|u^*(t)\|dt \geq \int_{E_0} \Big(M(t)+\epsilon_0\Big)dt=\int_{E_0}M(t)dt+\epsilon_0|E_0|.
\end{equation}
Taking  $\zeta(x,t)=\chi_{_{E_0}}\frac{u^*(t)}{\|u^*(t)\|}$, by the assumption of $u^*$, we get 
\begin{equation*}
\int_0^T \| \zeta(x,t)\|dt=\int_0^T\left\|\chi_{_{E_0}}\frac{u^*(t)}{\|u^*(t)\|}\right\|dt =|E_0|,
\end{equation*}
i.e., $\zeta\in L^1(0, T; L^2(\Omega))$.  
Hence, by $u_n\rightarrow u^*$ weakly$^*$ in $L^\infty(\tau,T; L^2(\Omega))$, one has
\begin{equation*}
\int_\tau^T\left\langle u_n(t), \chi_{_{E_0}}\frac{u^*(t)}{\|u^*(t)\|}\right\rangle dt=\int_\tau^T\langle u_n(t), \zeta\rangle dt\rightarrow  \int_\tau^T\langle u^*(t), \zeta(t)\rangle dt=\int_{E_0} \|u^*(t)\|dt, 
\end{equation*}
as $n\rightarrow \infty$.
On the other hand, since $u_n\rightarrow u^*$ weakly$^*$ in $L^\infty(\tau,T; L^2(\Omega))$ and $E_0\subset (\tau,T)$, we get $u_n\rightarrow u^*$ weakly$^*$ in $L^\infty(E_0; L^2(\Omega))$. Therefore, by $\|\zeta(t)\|=\left\| \chi_{_{E_0}}\frac{u^*(t)}{\|u^*(t)\|}\right\|=1$ and $\|u_n(t)\|\leq M(t)$ for a.e. $t\in (\tau, T)$, one gets
\begin{equation*}
\begin{split}
\int_{E_0} \|u^*(t)\|dt 
 &=\lim_{n\rightarrow \infty} \int_{E_0} \langle u_n(t), \zeta(t)\rangle dt\\
 &\leq \lim_{n\rightarrow \infty} \int_{E_0} \|u_n(t)\|\|\zeta(t)\|dt\\
 &=\lim_{n\rightarrow \infty} \int_{E_0} \|u_n(t)\| dt\\
 &\leq \int_{E_0}M(t)dt,
\end{split}
\end{equation*}
which is contradict with \eqref{wang1}. That completes the proof.
\end{proof}

Now, we are in the position to prove Theorem \ref{1.14.4}.

\begin{proof}[Proof of Theorem \ref{1.14.4}]
We carry out the proof by three steps as follows.

\ms

{\bf Step 1}. We shall show that $\tau\mapsto \e(\tau)$ is strictly monotonically increasing.

Let $0\leq \tau_1<\tau_2<T$. For $\tau_2$, by Theorem \ref{1.14.2}, 
there exists a unique $u_2\in\mathcal{U}_{M}$ such that 
\begin{equation*}
\|y(T; \chi_{(\tau_2,T)}u_2, y_0)\|=\e(\tau_2).
\end{equation*}
Now, take 
\begin{equation*}
u_1(t)=
\begin{cases}
u_2(t), &\mbox{ if } t\in (\tau_2, T),\\
0, &\mbox{ if } t\in [0,\tau_2].
\end{cases}
\end{equation*}
Then 
\begin{equation*}
\|y(T; \chi_{(\tau_1, T)}u_1, y_0\|=\|y(T; \chi_{(\tau_2, T)}u_2, y_0)\|\in \bar B(0,\e(\tau_2)).
\end{equation*}
By the definition of $\e(\tau)$ we get  
\begin{equation*}
\e(\tau_1)\leq\e(\tau_2).
\end{equation*}
In other words, $\tau\mapsto \e(\tau)$ is a monotonically increasing function.

\ms

Now, we show that $\tau\mapsto \e(\tau)$ is  strictly monotonically increasing.
If not, suppose that $\e(\tau_1)=\e(\tau_2)$. Then, there exist $u_1,u_2\in\mathcal{U}_{M}$, such that 
\begin{equation*}
\|y(T; \chi_{(\tau_1,T)}u_1, y_0)\|=\e(\tau_1)=\e(\tau_2)=\|y(T; \chi_{(\tau_2, T)}u_2, y_0)\|.
\end{equation*}
Taking
\begin{equation*}
\hat u_2(t)=
\begin{cases}
u_2(t), &\mbox{if } t\in (\tau_2,T),\\
0, &\mbox{if } t\in [0,\tau_2].
\end{cases}
\end{equation*}
one can easily check that
\begin{equation*}
\|y(T; \chi_{(\tau_1,T)}u_1,y_0)\|=\e(T_1)=\|y(T; \chi_{(\tau_1,T)}\hat u_2, y_0)\|.
\end{equation*}
By Theorem \ref{1.14.2}, the optimal control is unique. Hence, we get 
\begin{equation*}
u_1(t)=\hat u_2(t) \mbox{ for a.e. } t\in (0,T).
\end{equation*}
By the bang-bang property of minimal control, we have 
\begin{equation*}
\|u_1(t)\|=\|\hat u_2(t)\|=M(t) \mbox{ for a.e. } t\in (0,T).
\end{equation*}
That is impossible, since $\hat u_2(t)=0$ for a.e. $t\in (\tau_1,\tau_2)$ and $M(t)>0$ for a.e. $t\in (0,T)$. Therefore, $\tau\mapsto\e(\tau)$ is a strictly monotonically increasing function. 

\ms

{\bf Step 2}. We shall show that $\tau\mapsto \e(\tau)$ is continuous. 

For $\tau, \hat\tau\in [0,T)$ with $\tau<\hat \tau$, and for each $u\in \mathcal{U}_{M}$, the solutions to \eqref{6.29.1} are 
%Let $\tau, \hat\tau\in [0,T)$ with $\tau<\hat \tau$.  For each $u\in \mathcal{U}_{M}$, the terminal status at $\tau$ of the solution to \eqref{6.29.1} is 
\begin{equation*}
y(T; \chi_{(\tau, T)}u, y_0)=e^{\Delta T}y_0+\int_0^T e^{\Delta (T-\sigma)}\chi_\omega\chi_{(\tau,T)} u(\sigma) d \sigma
\end{equation*}
and
% the terminal status at $\hat\tau$ of the solution to \eqref{6.29.1} is 
\begin{equation*}
y(T; \chi_{(\hat\tau, T)}u, y_0)=e^{\Delta T}y_0+\int_0^T e^{\Delta (T-\sigma)}\chi_\omega\chi_{(\hat\tau,T)} u(\sigma) d \sigma,
\end{equation*}
respectively.
Then 
\begin{equation}\label{1.24.3}
\begin{split}
 &\|y(T; \chi_{(\tau,T)}u,y_0)- y(T; \chi_{(\hat\tau, T)}u, y_0)\|\\
 &= \left\|\int_0^T e^{\Delta (T-\sigma)}\chi_\omega\chi_{(\tau,T)} u(\sigma) d \sigma-\int_0^T e^{\Delta (T-\sigma)}\chi_\omega\chi_{(\hat\tau,T)} u(\sigma) d \sigma\right\|\\
 &=\left\| \int_\tau^{\hat \tau} e^{\Delta (T-\sigma)}\chi_\omega u(\sigma)d \sigma\right\|\\
 &\leq C|\hat \tau-\tau|,
\end{split}
\end{equation}
where $C$ is a constant independent of $\tau,\hat\tau$. Since $u\in \mathcal{U}_{M}$ (i.e., $\|u(t)\|\leq M(t) \leq M_T$ for a.e. $t\in [0,T]$), by \eqref{1.24.3},  we get $\mbox{dist}\big(\mathcal{R}(y_0,\tau,T),\mathcal{R}(y_0,\hat\tau,T)\big)<C|\tau-\hat\tau|$. Here $\mbox{dist}$ is the distance of two reachable sets $\mathcal{R}(y_0,\tau,T)$ and $\mathcal{R}(y_0,\hat\tau, T)$. Hence $\tau\mapsto \e(\tau)$ is a continuous function.

\ms

{\bf Step 3}. We shall prove \eqref{1.14.5}.

\ms

(1) We show that $\e=\e(\tau(\e))$ for $\e\in [\e(0), \e_T)$. 

Let $\e\in [\e(0),\e_T)$. By Step 1 in the proof of Theorem \ref{1.14.2}, there exist $\tau(\e)\in [0,T)$ and $u\in \mathcal{U}_{M}$ such that 
\begin{equation}\label{1.24.5}
\|y(T; \chi_{(\tau(\e), T)}u, y_0)\|=\e.
\end{equation}
For such $\tau(\e)\in [0,T)$, denote $\tau^*=\tau(\e)$. We consider the following problem
\begin{equation*}
\e(\tau^*)=\inf_{u\in\mathcal{U}_{M}} \|y(T; \chi_{(\tau^*, T)}u, y_0)\|.
\end{equation*}
By \eqref{1.24.5}, we can obtain
\begin{equation}\label{wang2}
\e(\tau^*)\leq \e. 
\end{equation}
By lemma \ref{1.24.6}, there exists a control $u^*\in\mathcal{U}_{M}$ such that 
\begin{equation*}
\|y(T; \chi_{(\tau^*, T)}u^*, y_0)\|=\e(\tau^*).
\end{equation*}
Now, taking 
\begin{equation*}
\tilde u(t)=
\begin{cases}
u^*(t), &\mbox{ if } t\in (\tau^*, T),\\
0, &\mbox{ if } t\in [0,\tau^*],
\end{cases}
\end{equation*}
 we have 
\begin{equation*}
\|y(T; \chi_{(\tau^*, T)}\tilde u, y_0)\|=\e(\tau^*).
\end{equation*}
By the definition of $\tau(\e)$ and \eqref{1.24.5} we get 
\begin{equation*}\label{wang3}
\e\leq \e(\tau^*).
\end{equation*}
which, together with \eqref{wang2}, yields
\begin{equation*}
\e=\e(\tau(\e)).
\end{equation*}

(2) We show that $\tau=\tau(\e(\tau))$ for $\tau\in [0,T)$. 

Let $\tau\in [0,T)$. By Lemma \ref{1.24.6}, there exists $u\in \mathcal{U}_{M}$ such that 
\begin{equation}\label{1.25.2}
y(T; \chi_{(\tau, T)}u, y_0)\in \bar B(0,\e(\tau)).
\end{equation}
For the given $\e(\tau)\in [\e(0), \e_T)$, denote $\e^*=\e(\tau)$. Consider the following problem 
\begin{equation*}
\tau(\e^*)=\sup \{\tilde \tau\bigm| y(T;\chi_{(\tilde \tau, T)}u, y_0)\in \bar B(0,\e^*),\ u\in\mathcal{U}_{M}\}.
\end{equation*}
Then we have 
\begin{equation}\label{wang4}
\tau\leq \tau(\e^*).
\end{equation}
By Lemma \ref{1.24.8}, there exists a control $u^*\in\mathcal{U}_{M}$ such that 
\begin{equation*}
y(T; \chi_{(\tau(\e^*), T)}u^*, y_0)\in \bar B(0,\e^*).
\end{equation*}
Then by the definition of $\e(\tau)$ and \eqref{1.25.2} we get
\begin{equation*}
\tau(\e^*)\leq \tau,
\end{equation*}
which, together with \eqref{wang4}, yields $\tau=\tau(\e(\tau))$ for $\tau\in [0,T)$. 
That completes the proof.
\end{proof}

\section*{Acknowledgement} 
The third author gratefully acknowledges Dr. Yubiao Zhang, Wuhan University, for his helpful 
discussion during this work.

%%%%%%%%%%%%%%%%%%%%%%%%%%%%%%%%%%%%%%%%%%%%%%%%%%%%
%%%%%%%%%%%%%%%%%%%%%%%%%%%%%%%%%%%%%%%%%%%%%%%%%%%%

% 
% \renewcommand\refname{Reference}
%\bibliographystyle{siam}         %{abbrv}%{plain}
%\bibliography{YQreference}

\end{document}